\newtheorem{theorem}{Theorem}[section]
\newtheorem{utv*}{Proposition}
\newtheorem{hyp*}{Conjecture}
\newtheorem{lemma}[theorem]{Lemma}
\newtheorem{defin}{Definition}
\newtheorem{zamech}{Remark}
\newtheorem*{th*}{Theorem}
\def\sli{\sum\limits}
\def\ili{\int\limits}
\def\a{\alpha}
\def\R{\mathbb{R}}
\def\ep{\varepsilon}
\begin{document}

\title{Random ``dyadic'' lattice in geometrically doubling metric space and $A_2$ conjecture}
\author{Alexander Reznikov, Alexander Volberg}
\address{Department of Mathematics, Michigan State University, East Lansing, MI 48824, USA}
\email{reznikov@ymail.com, sashavolberg@yahoo.com}
\date{}
\maketitle
\begin{abstract}
Recently three proofs of the $A_2$-conjecture were obtained. All of them are ``glued'' to euclidian space and a special choice of one random dyadic lattice. We build a random ``dyadic'' lattice in any doubling metric space which have properties that are enough to prove the $A_2$-conjecture in these spaces.
\end{abstract}
\section{Introduction}
Out goal is to build an analog of the probability space of dyadic lattices, which was first constructed in ~\cite[p. 207]{NTV}. The main property of this space was that probability of a fixed cube to be ``good'' in a very precise sense is bounded away from zero.

Later in proofs, related to weighted Calderon-Zygmund theory, it was enough to consider a probability space whose elementary event was a pair of dyadic lattices. This ideology was used in ~\cite{NTV2} and ~\cite{NTV}. In the case of a geometrically doubling metric space (GDMS) Michael Christ, ~\cite{Chr} introduced dyadic ``lattice'', and later Hyt\"onen and Martikainen, ~\cite{HM}, randomized Christs construction. They also have a probability space, which consists of pairs of dyadic lattices on the GDMS. The definition of a ``good'' cube in one lattice was given using the second lattice.

However, recently the solution of so called $A_2$ conjecture in Euclidian setting essentially required the construction from ~\cite[p. 207]{NTV}, where the elementary event is {\bf one} dyadic lattice, and not a pair of them. The definition of a ``good'' cube in this lattice was given in terms of other cubes of the {\bf same} lattice. All the proofs of $A_2$ conjecture so far were based on decomposition of a Calderon-Zygmund operator to dyadic shifts and thus required this {\bf one} lattice randomization.

The first proof of $A_2$ conjecture was given by Hyt\"{o}nen, in which he essentially used reduction to the weighted $T1$ theorem from ~\cite{PTV}. The new tool --- in comparison to ~\cite{NTV} --- is that one can, instead of estimating the contribution of ``bad'' cubes (as in ~\cite{NTV}), just ignore this contribution completely. For that one needed (we repeat) the construction of probability space, consisting of {\bf one} dyadic lattice as an elementary event. Also one needs that the probability to be ``good'' is the same for all cubes.

Immediately after this proof a simplified proof was given in ~\cite{HPTV}. It again uses the same properties of probability space of dyadic lattices, but the decomposition of the operator to dyadic shifts was simplified, and instead of the weighted $T1$ theorem for an arbitrary Calderon-Zygmund operator (~\cite{PTV}) one used $T1$ theorem for shifts, \cite{NTV3}.

Another Bellman function proof of the same $A_2$ conjecture was recently obtained in ~\cite{NV}.

In the present work we build a probability space of dyadic ``lattices'' in a GDMS, where an elementary event is one lattice, and the probability to be ``good'' is the same for all cubes in the lattice. As an application, with a combination of this idea and tools from any of ~\cite{Hyt}, ~\cite{HPTV}, or ~\cite{NV} (the last one will give the shortest proof), we obtain a proof of the $A_2$ conjecture in an arbitrary geometrically doubling metric space.

\section{Acknowledgements}
We are very grateful to Michael Shapiro and Dapeng Zhan for their help in proving the main lemma. We want to express our gratitude to Jeffrey Schenker for valuable discussions.
\section{First step}
Consider a doubling metric space $X$ with metric $d$ and doubling constant $A$. Instead of $d(x,y)$ we write $|xy|$.

As authors of ~\cite{HM}, we essentially use the idea of Michael Christ, ~\cite{Chr}, but randomize his construction in a different way. Therefore, we want to guard the reader that even though on the surface the proof below is very close to the proof from ~\cite{HM}, however, our construction is essentially different, and so the proof of the assertion in our main lemma, which was not hard in ~\cite{HM}, becomes much more subtle here.

For a number $k>0$ we say that a set $G$ is a $k$-grid if $G$ is maximal set, such that for any $x,y\in G$ we have $d(x,y)\geqslant k$.

Take a small positive number $\delta$ and a large natural number $N$, and for every $M\geqslant N$ fix $G_M=\{z_M^\a\}$ --- a $\delta^M$-grid. Now take $G_N$ and randomly choose a $G_{N-1}=\delta^{N-1}$-grid in $G_N$. Then take $G_{N-1}$ and randomly choose a $G_{N-2}=\delta^{N-2}$-grid there.
\begin{lemma}
$$
\bigcup\limits_{y\in G_{N-k}} B(y, 3\delta^{N-k}) = X.
$$
(for $N+k$ this is obvious)
\end{lemma}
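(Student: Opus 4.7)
The plan is to exploit the chain of inclusions $G_{N-k}\subset G_{N-k+1}\subset\cdots\subset G_N$ together with the maximality property defining each grid, and to show that any point of $X$ can be ``lifted'' through these inclusions one step at a time, incurring only a geometric sum of errors that still fits inside the radius $3\delta^{N-k}$.

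Fix $x\in X$. The maximality of $G_N$ as a $\delta^N$-separated set in $X$ forces some $y_0\in G_N$ with $|xy_0|<\delta^N$, since otherwise one could adjoin $x$ to $G_N$ and contradict maximality. Then, inductively, given $y_{j-1}\in G_{N-j+1}$, the maximality of $G_{N-j}$ as a $\delta^{N-j}$-separated subset of $G_{N-j+1}$ produces $y_j\in G_{N-j}$ with $|y_{j-1}y_j|<\delta^{N-j}$ by the same contradiction argument (or $y_j=y_{j-1}$ if $y_{j-1}$ already lies in $G_{N-j}$). Iterating $k$ steps gives $y_k\in G_{N-k}$.

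The triangle inequality then delivers
$$|xy_k|<\sum_{j=0}^{k}\delta^{N-j}=\delta^{N-k}\sum_{j=0}^{k}\delta^{j}\leq\frac{\delta^{N-k}}{1-\delta},$$
and this is at most $3\delta^{N-k}$ provided $\delta\leq 2/3$, which is unproblematic since the paper already stipulates $\delta$ to be small. Hence $x\in B(y_k,3\delta^{N-k})$, and the covering is proved. The only genuine subtlety is the maximality-to-nearness implication at each level (and in particular that maximality is preserved even though the ambient set changes from $X$ to $G_{N-j+1}$); the rest is a standard geometric series estimate.
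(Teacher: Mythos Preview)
Your proof is correct and follows essentially the same route as the paper's own argument: pick a nearest grid point at level $N$, climb one level at a time using maximality, and sum the resulting geometric series. The only cosmetic differences are that the paper writes non-strict inequalities and bounds the sum by $2\delta^{N-k}$ (implicitly taking $\delta\le 1/2$) rather than your $3\delta^{N-k}$, and your explicit remark about maximality being relative to $G_{N-j+1}$ is a welcome clarification the paper leaves implicit.
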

\begin{proof}
Take $x\in X$ 

Then, since $G_{N}$ is maximal, there exists a point $y_0 \in G_{N}$, such that $|xy_0|\leqslant \delta^{N}$. Since $G_{N-1}$ is maximal in $G_N$, there is a point $y_1\in G_{N-1}$, such that $|y_0y_1|\leqslant \delta^{N-1}$. Similarly we get $y_2, \ldots, y_k$ and then
$$
|xy_k| \leqslant |xy_0|+\ldots + |xy_k|\leqslant \delta^{N}+\ldots + \delta^{N-k} = \delta^{N-k}(1+\delta+\ldots + \delta^{k})\leqslant \frac{\delta^{N-k}}{1-\delta} \leqslant 2\delta^{N-k}.
$$
\end{proof}

Once we have all our sets $G_N$, we introduce a relationship $\prec$ between points. We follow ~\cite{HM} and ~\cite{Chr}.

Take a point $y_{k+1}\in G_{k+1}$. There exists at most one $y_k\in G_k$, such that $|y_{k+1}y_k|\leqslant \frac{\delta^{k}}{4}$. This is true since if there are two such points $y_k^1, \; y_k^2$, then
$$
|y_k^1 y_k^2|\leqslant \frac{\delta^{k}}{2},
$$
which is a contradiction, since $G_k$ was a $\delta^{k}$-grid in $G_{k+1}$.

Also there exists at least one $z_k\in G_k$ such that $|y_{k+1} z_k|\leqslant 3 \delta^{k}$. This is true by the lemma.

Now, if there exists an $y_k$ as above, we set $y_{k+1}\prec y_k$. If no, then we pick one of $z_k$ as above and set $y_{k+1}\prec z_k$. For all other $x\in G_{k}$ we set $y_{k+1}\not\prec x$. Then extend by transitivity.

We also assume that $y_k\prec y_k$.

We do this procedure randomly and independently, and treat same families of $G_k$'s with different $\prec$-law as different families.

Take now a point $y_k\in G_k$ and define
$$
Q_{y_k} = \bigcup\limits_{z\prec y_k, z\in G_{\ell}} B(z, \frac{\delta^{\ell}}{100}).
$$

\begin{lemma}
For every $k$ we have
$$
X=\bigcup\limits_{y_k\in G_k} \textup{clos}(Q_{y_k})
$$
\end{lemma}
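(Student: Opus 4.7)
The strategy is to approximate an arbitrary $x \in X$ by points of arbitrarily fine grids $G_\ell$ (letting $\ell \to \infty$), to trace each such approximant up to its unique ancestor in $G_k$ via the $\prec$-relation, and then to use geometric doubling to extract a subsequence that shares a common ancestor.

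First, I would fix $x \in X$ and, for each $\ell \geq k$, use the maximality of the $\delta^\ell$-grid $G_\ell$ to select $z_\ell \in G_\ell$ with $|xz_\ell| \leq \delta^\ell$ (otherwise $G_\ell \cup \{x\}$ would still be $\delta^\ell$-separated, contradicting maximality). In particular $z_\ell \to x$.

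Next, by construction of $\prec$, every $z_{j+1} \in G_{j+1}$ has a unique parent $p(z_{j+1}) \in G_j$ with $z_{j+1} \prec p(z_{j+1})$, and this parent satisfies $|z_{j+1}\, p(z_{j+1})| \leq 3\delta^j$ (using $\delta^j/4 \leq 3\delta^j$ in the first case of the definition, and $3\delta^j$ directly in the second). Iterating from $z_\ell$ down to level $k$ produces $z_\ell^{(k)} \in G_k$ with $z_\ell \prec z_\ell^{(k)}$ by transitivity, and a telescoping triangle inequality gives
$$
|z_\ell\, z_\ell^{(k)}| \leq \sum_{j=k}^{\ell-1} 3\delta^j \leq \frac{3\delta^k}{1-\delta}.
$$
Combined with Step 1, every $z_\ell^{(k)}$ lies in a fixed ball $B(x, C\delta^k)$ with $C$ independent of $\ell$.

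At this point the key input is geometric doubling: since $G_k$ is $\delta^k$-separated, the cardinality of $G_k \cap B(x, C\delta^k)$ is bounded by a constant depending only on $A$, $\delta$, and $C$. By the pigeonhole principle, some $y_k \in G_k$ coincides with $z_\ell^{(k)}$ for infinitely many indices $\ell_j \to \infty$. For each such $\ell_j$ we have $z_{\ell_j} \prec y_k$, and hence
$$
z_{\ell_j} \in B\!\left(z_{\ell_j}, \tfrac{\delta^{\ell_j}}{100}\right) \subset Q_{y_k}.
$$
Since $z_{\ell_j} \to x$, this gives $x \in \textup{clos}(Q_{y_k})$, completing the proof.

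The main obstacle is the third step: one has to be careful that the $\prec$-ancestor is genuinely unique (so the chain is well-defined) and that doubling is invoked correctly to ensure that only finitely many candidate ancestors in $G_k$ can arise. The other ingredients --- maximality of the grids, the triangle inequality, and the geometric-series bound --- are routine, but it is in this doubling/pigeonhole step that the GDMS hypothesis genuinely enters.
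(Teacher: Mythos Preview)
Your argument is essentially the paper's own proof: approximate $x$ by grid points $z_\ell \to x$, trace each to its level-$k$ ancestor via the $\prec$-chain, bound the ancestors inside a fixed ball $B(x,C\delta^k)$, invoke doubling plus pigeonhole to find a common ancestor $y_k$, and conclude $x\in\textup{clos}(Q_{y_k})$. One small caveat: for $\ell<N$ the grid $G_\ell$ is maximal only inside $G_{\ell+1}$, not in $X$, so your direct maximality bound $|xz_\ell|\le\delta^\ell$ is not literally available there --- the paper uses the preceding lemma to get $3\delta^\ell$ instead --- but since you only need $\ell\to\infty$, simply restricting to $\ell\ge N$ repairs this without changing anything.
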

\begin{proof}
Take any $x\in X$. By the previous lemma, for every $m\geqslant k$ there exists a point $x_m\in G_m$, such that $|xx_m|\leqslant 3\delta^{m}$. In particular, $x_m \to x$. Fix for a moment $x_m$. Than there are points $y_{m-1}\in G_{m-1}, \ldots, y_k\in G_{k}$, such that $x_m \prec y_{m-1}\prec \ldots \prec y_k$. In particular, $x_m \in Q_{y_k}$, where $y_k$ depends on $x_m$. Than
$$
|y_k x| \leqslant |y_k x_m| + |x_m x|\leqslant |y_k x_m| + 3\delta^{m} \leqslant |y_k x_m| + 3\delta^{k}.
$$
Moreover, by the chain of $\prec$'s, we know that $|y_k x_m|\leqslant 10\delta^{k}$. Therefore,
$$
|y_k x|\leqslant 15\delta^{k}.
$$
We claim that the set $\{y_k\}=\{y_k(x_m)\}_{m\geqslant k}$ is finite. This is true since all $y_k$'s are separated from each other and by the doubling of our space (we are ``stuffing'' the ball $B(x, 15\delta^{k})$ with balls $B(y_k, \delta^{k})$).

So, take an infinite subsequence $x_{m}$ that corresponds to one point $y_k\in G_k$. Then we get $x_m \in Q_{y_k}$, $x_m\to x$, so $x\in \textup{clos} Q_{y_k}$, and we are done.
\end{proof}

\section{Second step: technical lemmata}
Define
$$
\tilde{Q}_{y_k} = X\setminus \bigcup_{z_k\not= y_k, z_k\in G_k}\textup{clos}Q_{z_k}.
$$
In particular,
$$
Q_{y_k}\subset \tilde{Q}_{y_k}\subset \textup{clos}(Q_{y_k}).
$$
\begin{lemma}[Lemma 4.5 in ~\cite{HM}]
Let $m$ be a natural number, $\ep>0$, and $\delta^m \geqslant 100\ep$. Suppose $x\in \textup{clos}Q_{y_k}$ and $dist(x, X\setminus \tilde{Q}_{y_k})<\ep \delta^{k}$. Then for any chain
$$
z_{k+m}\prec z_{k+m-1}\prec\ldots \prec z_{k+1}\prec z_k,
$$
such that $x\in \textup{clos}Q_{z_{k+m}}$, there holds
$$
|z_i z_j|\geqslant \frac{\delta^{j}}{100}, \; \; \; k\leqslant j < i \leqslant k+m.
$$

\end{lemma}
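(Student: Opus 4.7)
I would argue by contradiction: assume indices $k\leq j_0<i_0\leq k+m$ with $|z_{i_0}z_{j_0}|<\delta^{j_0}/100$ exist, and derive a contradiction with the hypothesis $\textup{dist}(x,X\setminus\tilde{Q}_{y_k})<\varepsilon\delta^k$.

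First I locate $x$ geometrically. By the same telescoping chain estimate used in the previous lemma (summing $|z_{b+1}z_b|\leq 3\delta^b$ as a geometric series), $\textup{clos}(Q_y)\subset B(y,5\delta^\ell)$ for $y\in G_\ell$; thus $|xz_{k+m}|\leq 5\delta^{k+m}$, and the same telescoping along the chain gives $|z_{k+m}z_{i_0}|\leq 4\delta^{i_0}$. Combined with the contradiction hypothesis this yields, for $\delta\leq 1/900$,
\[
|xz_{j_0}|\leq 5\delta^{k+m}+4\delta^{i_0}+\frac{\delta^{j_0}}{100}\leq \frac{\delta^{j_0}}{50}.
\]

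The key step is the following isolation claim, which I prove by inspecting the definition of $\prec$ level-by-level. Write $z_k$ for the level-$k$ endpoint of the given chain, so $z_{j_0}\prec z_k$. Then for every $w'\in G_k\setminus\{z_k\}$ and every $z''\prec w'$, $|z''z_{j_0}|>\delta^{j_0}/8$. Indeed, otherwise the level-$(j_0+1)$ ancestor $v$ of $z''$ satisfies $|vz_{j_0}|\leq \delta^{j_0}/4$ (using $|z''v|\leq 4\delta^{j_0+1}$ and $\delta$ small), and the uniqueness of the first-case ancestor forces $v\prec z_{j_0}$; propagating this equality down the chain, the level-$k$ ancestor of $z''$ would be $z_k$, not $w'$, a contradiction. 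Consequently $\textup{dist}(z_{j_0},\textup{clos}(Q_{w'}))\geq\delta^{j_0}/9$ for every such $w'$.

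The contradiction is now extracted by a short case analysis on $z_k$. If $z_k=y_k$, the isolation gives $\textup{dist}(z_{j_0},X\setminus\tilde{Q}_{y_k})\geq\delta^{j_0}/9$, whence $\textup{dist}(x,X\setminus\tilde{Q}_{y_k})\geq\delta^{j_0}/11$; but the hypothesis $\delta^m\geq 100\varepsilon$ gives $\varepsilon\delta^k\leq\delta^{k+m}/100\leq\delta^{j_0}/100$, contradiction. If $z_k\neq y_k$, applying the isolation with $w'=y_k$ gives $\textup{dist}(z_{j_0},\textup{clos}(Q_{y_k}))\geq\delta^{j_0}/9$, incompatible with $x\in\textup{clos}(Q_{y_k})$ and $|xz_{j_0}|\leq\delta^{j_0}/50$. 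The main obstacle is the isolation claim: one must carefully keep the telescoped chain error below the first-case threshold $\delta^{j_0}/4$, which is what forces $\delta$ to be chosen small; once that is in place, the remainder is triangle-inequality bookkeeping.
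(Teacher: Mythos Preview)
Your overall architecture matches the paper's: argue by contradiction, use the chain to place $x$ within $O(\delta^{j_0})$ of $z_{j_0}$, then split on whether $z_k=y_k$. The difference is in how the crucial lower bound on $\textup{dist}(z_{j_0},X\setminus\tilde Q_{y_k})$ is obtained. The paper gets it in one line from the inclusion recorded just before the lemma: since $z_{j_0}\prec y_k$, the definition of $Q_{y_k}$ gives $B(z_{j_0},\delta^{j_0}/100)\subset Q_{y_k}\subset\tilde Q_{y_k}$, hence $\textup{dist}(z_{j_0},X\setminus\tilde Q_{y_k})\ge\delta^{j_0}/100$; the triangle inequality then yields the contradiction immediately. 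Your ``isolation claim'' is effectively a from-scratch reproof of the disjointness $Q_{y_k}\cap\textup{clos}\,Q_{w'}=\emptyset$, which is more work than needed. Note also that your argument, as written, only treats centers $z''\in G_\ell$ with $\ell\ge j_0+1$; for $k\le\ell\le j_0$ you need the symmetric observation (apply the first-case rule to $z_{\ell+1}$ and $z''$) to get $|z''z_{j_0}|\ge\delta^{\ell}/8$, since the bare bound $|z''z_{j_0}|\ge\delta^{j_0}$ is too weak to absorb the ball radius $\delta^{\ell}/100$ when passing to $\textup{dist}(z_{j_0},\textup{clos}\,Q_{w'})$. Finally, the paper's second case is handled by a cleaner reduction than yours: from $x\in\textup{clos}\,Q_{y_k}$ with $y_k\ne z_k$ one gets $x\in X\setminus\tilde Q_{z_k}$, so $\textup{dist}(x,X\setminus\tilde Q_{z_k})=0<\varepsilon\delta^k$, and the first case applies verbatim with $z_k$ in place of $y_k$.
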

\begin{proof}
Suppose $|z_i z_j| < \frac{\delta^{j}}{100}$. We first consider a case when $z_k=y_k$.
Since $z_j\prec z_k=y_k$, we have $B(z_j, \frac{\delta^j}{200})\subset Q_{y_k}\subset \tilde{Q}_{y_k}$. Therefore,
$$
\frac{\delta^j}{200}\leqslant dist(z_j, X\setminus \tilde{Q}_{y_k})\leqslant dist(x, X\setminus \tilde{Q}_{y_k}) + dist(x, z_i) + dist(z_i, z_j) < \ep \delta^{k} + 5\delta^i + \frac{\delta^{j}}{100}
$$
If $\delta$ is less than, say, $1\over 1000$, then we get a contradiction.

The only not obvious estimate is that $dist(x, z_i)<5\delta_i$. It is true since $x\in \textup{clos}Q_{z_{k+m}}$.

We have proved the lemma with assumption that $z_k=y_k$. Let us get rid of this assumption. We know that
$$
x\in \textup{clos}Q_{z_{k+m}}\subset \textup{clos}Q_{z_{k}}.
$$
Also we have $x\in \textup{clos}Q_{y_k}$, so, since
$$
\tilde{Q}_{z_k}=X\setminus \bigcup_{u_k\not = z_k} \textup{clos}Q_{u_k} \subset X\setminus \textup{clos}Q_{y_k},
$$
we get $x\in X\setminus \tilde{Q}_{z_k}$. In particular, $dist(x, X\setminus \tilde{Q}_{z_k})=0<\ep \delta^k$, and we are in the situation of the first part. This finishes our proof.
\end{proof}

\begin{lemma}[MAIN LEMMA]
Fix $x_k\in G_k$. Then
\begin{equation}
\label{ryadom}
\mathbb{P}(\exists x_{k-1}\in G_{k-1}\colon |x_k x_{k-1}|<\frac{\delta^{k-1}}{1000})\geqslant a
\end{equation}
for some $a\in (0,1)$.
\end{lemma}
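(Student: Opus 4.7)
The plan is to establish the stronger statement that $x_k$ itself is selected to lie in $G_{k-1}$ with probability at least some constant $a>0$ depending only on the doubling constant $A$ and the parameter $\delta$. If this holds, then simply setting $x_{k-1}:=x_k$ gives \eqref{ryadom} trivially, with distance zero.

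The first step is to localize the problem via doubling. Since $G_k$ is a $\delta^k$-separated set, every ball of radius $C\delta^{k-1}$ contains at most $N_0=N_0(A,\delta,C)$ points of $G_k$ (cover the ball by balls of radius $\delta^k/2$ using iterated doubling). In particular, the event ``$x_k \in G_{k-1}$'' is determined by the finite configuration $G_k\cap B(x_k,C\delta^{k-1})$ for a suitable absolute $C$, because any point of $G_k$ outside this ball is at distance more than $\delta^{k-1}$ from $x_k$ and cannot block $x_k$ from being added to a maximal $\delta^{k-1}$-separated subset.

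The second step is to make the randomization explicit. The most convenient model for a ``random $\delta^{k-1}$-grid in $G_k$'' is: assign i.i.d.\ uniform $[0,1]$ priorities $\{U_z\}_{z\in G_k}$ and build $G_{k-1}$ greedily by processing points in increasing order of $U_z$, including $z$ whenever no already-included point lies within $\delta^{k-1}$ of $z$. This yields a random maximal $\delta^{k-1}$-separated subset of $G_k$. Under this model, a sufficient condition for $x_k\in G_{k-1}$ is that $U_{x_k}$ is strictly the minimum among $\{U_z: z\in G_k\cap B(x_k,\delta^{k-1})\}$; by symmetry this occurs with probability at least $1/N_0$, and we may take $a:=1/N_0$.

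The main obstacle I anticipate is aligning this local, priority-based analysis with the actual random procedure used by the authors, which they emphasize differs materially from that of~\cite{HM} and makes this lemma ``much more subtle.'' One must check that the specific rule they use to produce $G_{k-1}$ (together with the random $\prec$-relation) retains enough local symmetry or independence for a counting argument of the above form to go through, and that the constant $a$ does not degrade through the recursion on $k$. A secondary subtlety is that the claim is for every level $k$: the bound must hold conditionally on all grids and $\prec$-choices already made at coarser scales, which should reduce to the same local counting argument provided the randomness at each scale is independent of the earlier scales. If the authors' construction is more globally correlated, the localization step is the place where the proof will genuinely differ, and handling those correlations is likely where the help acknowledged to Shapiro and Zhan enters.
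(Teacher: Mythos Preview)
Your reduction to the stronger claim $\mathbb{P}(x_k\in G_{k-1})\ge a$ is exactly what the paper does. The gap is precisely where you flag it, and it is real: the paper does \emph{not} randomize by priorities. They fix the finite set $Y=G_k$ and put the \emph{uniform} measure on all maximal $\delta^{k-1}$-separated subsets of $Y$ (equivalently, on all proper two-colorings: red points pairwise at distance $\ge 1$, every green point with a red neighbor at distance $<1$). Under this measure both of your steps fail as written. The event $\{x_k\in G_{k-1}\}$ is not governed by $G_k\cap B(x_k,C\delta^{k-1})$ alone, because the \emph{count} of maximal lattices containing versus excluding $x_k$ depends on the global shape of $Y$; and there is no i.i.d.\ labeling available, so the $1/N_0$ symmetry bound has no analog.

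The missing idea is a recoloring injection. For each nonempty $S\subset B(v,1)\setminus\{v\}$ let $W_S$ be the set of proper colorings with $v$ green and with red set inside $B(v,1)$ equal exactly to $S$; let $B$ be the set of proper colorings with $v$ red. The paper constructs an injection $W_S\to B$: recolor $v$ red, recolor $S$ green, then greedily repair those points of $\tilde S$ (points outside $B(v,1)$ but within distance $1$ of $S$) that lost their only red neighbor, by turning some of them red. The map is injective because it only touches $B(v,1)\cup\tilde S$, and all colorings in a fixed $W_S$ already agree there. Since $|B(v,1)|\le d$ by doubling, there are at most $2^{d-1}$ choices of $S$, giving $a\ge 2^{-d+1}$. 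Your priority argument is correct and cleaner for the greedy-priority model, but for the uniform-on-lattices model one genuinely needs the combinatorial inequality $\text{card}\,W_S\le\text{card}\,B$, and the recoloring map is what supplies it.
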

\begin{proof}
To illustrate the proof we consider a particular case, but with a stronger result.

Consider $X$ to be a tree with a distance between to neighbor vertices equal to one. The geometrically doubling condition can be expressed in the following way: each vertex has no more that $C$ sons, where $C$ is a fixed number. For the sake of simplicity we consider $C=3$.

We now randomly choose a $2$-grid $G$. In other words, we color our tree in two colors, red and green, such that
\begin{itemize}
\item If a vertex is red then all its neighbors are green;
\item The set of red vertices is maximal, i.e., if a vertex is green there is a red vertex on distance $1$.
\end{itemize}

We proof a stronger result: fix a vertex $z$ then
$$
\mathbb{P}(\exists x\in G \colon x=z) > a.
$$

Without loss of generality, we consider that $z$ is a root and it has $3$ sons.
Consider $z$ and assume that it is red. We introduce a procedure of recoloring our tree such that $z$ becomes green. If $z$ is red then all its sons, $z_1, z_2, z_3$, are green. If one of them has only green sons then everything is easy: we color $z$ in green color and this son in red.

Suppose all $z_{1,2,3}$ have one red son. Then we proceed by induction (the base, when height of our tree is $2$, is trivial). Take $z_1$ and its sons $z_{11}, z_{12}, z_{13}$. If $z_{11}$ is red then we recolor all the sub-tree of $z_{11}$, such that $z_{11}$ becomes green. We do the same thing with $z_{12}$ and $z_{13}$. After that we can color $z_{1}$ in red and $z$ in green.

Notice that this procedure has the following properties: for two different initial coloring in gives different coloring. So we proved that
$$
\mathbb{P}(z \;\mbox{is green}\: ) \geqslant \frac{1}{2}.
$$
Now,
$$
\mathbb{P}(z \; \mbox{is red}\:) = \mathbb{P}(z_{1,2,3} \; \mbox{are green}\:),
$$
so
\begin{multline}
\frac{\#\{\mbox{colorings with red} \; z\}}{\# \{\mbox{all colorings}\}} = \frac{\#\{\mbox{colorings with green} \; z_1\}}{\# \{\mbox{all colorings of  $z_1$-subtree}\}}\frac{\#\{\mbox{colorings with green} \; z_2\}}{\# \{\mbox{all colorings of $z_2$-subtree}\}}\cdot \\ \frac{\#\{\mbox{colorings with green} \; z_3\}}{\# \{\mbox{all colorings of $z_3$-subtree}\}} \cdot \\ \frac{\#\{\mbox{all colorings of $z_1$-subtree}\}\cdot \#\{\mbox{all colorings of $z_2$-subtree}\} \cdot \#\{\mbox{all colorings of $z_3$-subtree}\}}{\#\{\mbox{all colorings}\}}
\end{multline}
First three fractions are bigger than $\frac{1}{2}$ by the recoloring argument. The last one is not equal to one (getting a coloring the whole tree, we may not get a proper coloring of, say $z_1$-subtree), but is also bigger than $\frac{1}{2}$. Here is the reason:
$$
\#\{\mbox{all colorings}\} \leqslant 2\#\{\mbox{colorings with green $z$}\},
$$
and colorings with green $z$ always give a proper coloring of $z_{1,2,3}$-subtrees.

So our probability is bigger than $\frac{1}{16}$, which finishes the proof.
\end{proof}

\begin{zamech}
All numbers from this proof, such as distances $1$ and $2$, number of sons, can be changed to arbitrary fixed numbers.
\end{zamech}

After this illustration let us give the proof of the main lemma in full generality.

\begin{proof}
So let us be in a compact metric situation. By rescaling we can think that we work with $G_1$ and choose $G_0$. We can even think that the metric space consists of finitely many points, it is $X:=G_2$. The finite set $G_1\subset X$ consists of points  having the following properties:

\noindent 1. $\forall x,y\in G_1$ we have $|xy|\ge \delta$;

\noindent 2. if $z\in X\setminus G_1$ then $\exists x\in G_1$ such that $|zx|<\delta$.

These two properties are equivalent to saying that the subset $G_1$ of $X$ consists of points such that $\forall x,y\in G_1$ we have $|xy|\ge \delta$ and we cannot add any point from $X$ to $G_1$ without violating that property. In other words: $G_1$ is a {\it maximal} set with property 1.

Here the word ``maximal'' means maximal with respect to inclusion, not maximal in the sense of the number of elements.

Now we consider the new metric space $Y=G_1$ and $G_0$ is any maximal subset such that
\begin{equation}
\label{max1}
\forall x, y\in G_0\,,\, |xy|\ge 1\,.
\end{equation}
In other words, we have
\noindent 1. $\forall x,y\in G_0$ we have $|xy|\ge 1$;

\noindent 2. if $z\in Y\setminus G_0$ then $\exists x\in G_0$ such that $|zx|<1$.

There are finitely many such maximal subsets $G_0$ of $Y$. We prescribe for each choice the same probability.
Now we want to prove the claim that is even stronger than \eqref{ryadom}. Namely, we are going to prove that given $y\in Y$

\begin{equation}
\label{ryadom1}
\mathbb{P}(\exists x_{0}\in G_{0}\colon x_0 =y)\geqslant a\,,
\end{equation}
where $a$ depends only on $\delta$ and the constants of geometric doubling of our compact metric space.

Let $Y$ be any metric space with finitely many elements. We will color the points of $Y$ into red and green colors. The coloring is called proper if

\noindent 1. every red point does not have any other red point at distance $<1$;

\noindent 2. every green point has at least one red point at distance $<1$.

Given {\it a proper coloring} of $Y$ the collection of red points is called $1$-{\it lattice}. It is a maximal (by inclusion) collection of points at distance $\ge 1$ from each other.

What we need to finish the main lemma's proof is

\begin{lemma}
\label{finiteY}
Let $Y$ be a finite metric space as above. Assume $Y$ has the following property:
\begin{equation}
\label{finite}
\text{In every ball of radius  less than}\,\, 1\,\,\text{ there are at most}\,\, d\,\,\text{ elements}\,.
\end{equation}
 Let $\mathcal{L}$ be a collection of $1$-lattices in $Y$. Elements of $\mathcal{L}$ are called $L$. Let $v\in Y$. Then
$$
\frac{\text{the number of 1-lattices L such that v belongs to L}}{\text{the total number of 1-lattices L}} \ge a>0\,,
$$
where $a$ depends only on $d$.
\end{lemma}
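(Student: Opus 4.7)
The plan is to construct a deterministic map $\Phi : \mathcal{L} \to \mathcal{L}_v$, where $\mathcal{L}_v \subset \mathcal{L}$ denotes the $1$-lattices containing $v$, with the property that every fiber $\Phi^{-1}(L')$ has cardinality at most $2^M$ for a constant $M$ depending only on $d$ and the doubling constant of $Y$. Surjectivity of $\Phi$ will then immediately give
\[
\frac{\#\{L \in \mathcal{L} : v \in L\}}{\#\mathcal{L}} \;\geq\; 2^{-M},
\]
which is \eqref{ryadom1} with $a := 2^{-M}$.

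To build $\Phi$, I would fix once and for all an arbitrary total order on the finite set $Y$. Given $L \in \mathcal{L}$, put $R := L \cap B(v,1)$ and $L_0 := (L \setminus R) \cup \{v\}$. The set $L_0$ is $1$-separated: points of $L \setminus R$ are pairwise at distance $\geq 1$ by virtue of $L$, and each lies outside $B(v,1)$. Now process the points of $Y \setminus L_0$ in the fixed order, adjoining to the running set any point whose distance to every currently chosen point is $\geq 1$, and call the result $\Phi(L) =: L'$. Then $L' \supset L_0 \ni v$ is a maximal $1$-separated subset of $Y$, hence a member of $\mathcal{L}_v$. Moreover, when $v \in L$ already, $R = \{v\}$, $L_0 = L$, no greedy step adds anything, and $\Phi(L) = L$; in particular $\Phi|_{\mathcal{L}_v} = \mathrm{id}$, so $\Phi$ is surjective.

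The crux is then the symmetric-difference estimate
\[
L \,\triangle\, \Phi(L) \;\subset\; B(v,2) \cap Y.
\]
For $L \setminus L'$ this is immediate because such points lie in $R \subset B(v,1)$. For $L' \setminus L$, pick $w \in L'$ with $w \notin L$; by the maximality of $L$ some $u \in L$ satisfies $|uw| < 1$. If $u$ were in $L \setminus R \subset L_0$ the greedy step would have refused $w$, which is impossible; so $u \in R$ and $|vw| \leq |vu| + |uw| < 2$. This forces every preimage of $L'$ to agree with $L'$ off $B(v,2) \cap Y$, and hence $|\Phi^{-1}(L')| \leq 2^{|B(v,2) \cap Y|}$.

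Finally, geometric doubling of $Y$ lets one cover $B(v,2)$ by a bounded number of balls of radius less than $1$, each containing at most $d$ points of $Y$ by hypothesis, giving $|B(v,2) \cap Y| \leq M$ for a constant $M$ depending only on $d$ and the doubling constant. The step I expect to require the most care is the symmetric-difference inclusion — especially the claim that an added point $w \in L' \setminus L$ has a witness $u \in L$ that is forced to lie in $R$ — since it is here that the precise choice to remove only $L \cap B(v,1)$, rather than a wider neighborhood, is essential: any larger removal would allow new insertions arbitrarily far from $v$ and destroy the uniform fiber bound.
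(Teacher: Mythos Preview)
Your argument is essentially the paper's own: both build a ``recoloring'' map that forces $v$ to be red by deleting the red points in $B(v,1)$, inserting $v$, and then greedily repairing maximality, and both observe that this procedure only disturbs points near $v$. The paper organizes the count slightly differently---it first stratifies the $v$-green lattices by $S=L\cap(B(v,1)\setminus\{v\})$ and shows each stratum injects into the $v$-red lattices---but the mechanism is identical.

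There is one point worth tightening. As written, your fiber bound is $2^{|B(v,2)\cap Y|}$, and you then invoke geometric doubling to control $|B(v,2)\cap Y|$. The lemma, however, promises that $a$ depends only on $d$, and hypothesis \eqref{finite} alone does not bound $|B(v,2)|$ (a star with many leaves at distance $3/2$ is a counterexample). In the paper's context this is harmless, since $Y=G_1$ sits inside a doubling space and the extra dependence is absorbed into $d$ anyway; but if you want the statement exactly as given, sharpen your symmetric-difference inclusion to
\[
L\,\triangle\,\Phi(L)\ \subset\ \bigcup_{r\in B(v,1)\cap Y} B(r,1),
\]
which follows from the very argument you gave (the witness $u$ lies in $R\subset B(v,1)$, so $w\in B(u,1)$). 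The right-hand side is a \emph{fixed} set with at most $d^2$ elements by \eqref{finite}, giving $a\ge 2^{-d^2}$ with no appeal to doubling. The paper's stratified version does a bit better, yielding $a\ge 2^{-d+1}$, but either suffices.
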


\begin{proof}
Given $v\in Y$ consider all subsets of $B(v,1)\setminus{v}$, this collection is called $\mathcal{S}$. Let $S\in \mathcal{S}$. We call $W_S$ the collection of all proper colorings such that $v$ is green, all elements of $S$ are red, and all elements of $B(v,1) \setminus S$ are green. We call $\tilde S$ all points in $Y$, which are not in $B(v,1)$, but at distance $<1$ from some point in $S$.

All proper colorings of $Y$ such that $v$ is red are called $B$. Let us show that
\begin{equation}
\label{2d}
\text{card}\, W_S\le \text{card}\, B\,.
\end{equation}
Notice that if \eqref{2d} were proved, we would be done with Lemma \ref{finiteY}, $a\ge 2^{-d+1}$, and, consequently, the proof of the main lemma would be finished, $a\ge 2^{-\delta^{-D}}$, where $D$ is a geometric doubling constant.

To prove \eqref{2d} let us show that we can recolor any proper coloring from $W_S$ into the one from $B$, and that this map is injective. Let $L\in W_S$. We

\noindent 1. Color $v$ into red;

\noindent 2. Color $S$ into green;

\noindent 3. Elements of $\tilde S$ were all green before. We leave them green, but we find among them all those $y$ that now in the open ball $B(y,1)$  in $Y$ all elements are green. We call them yellow (temporarily) and denote them
$Z$;

\noindent 4. We enumerate $Z$ in any way (non-uniqueness is here, but we do not care);

\noindent 5. In the order of enumeration color yellow points to red, ensuring that we skip recoloring of a point in $Z$ if it is at $<1$  distance to any previously colored yellow-to-red point from $Z$. After several steps all green and yellow elements of $\tilde S$ will have the property that at distance $<1$ there is a red point;

\noindent 6. Color the rest of yellow (if any) into green and stop.

We result in a proper coloring (it is easy to check), which is obviously $B$. Suppose $L_1, L_2$ are two different proper coloring in $W_S$. Notice that the colors of $v, S, B(v,1)\setminus S$, $\tilde S$ are the same for them. So they differ somewhere else. But our procedure does not touch ``somewhere else". So the modified colorings $L_1', L_2'$ that we obtain after the algorithm 1-6 will differ as well may be even more). So our map $W_S\rightarrow B$ (being not uniquely defined) is however injective. We proved \eqref{2d}.

\end{proof}

\end{proof}

\section{Main definition and theorem}
\begin{defin}[Bad cubes]
Take a ``cube'' $Q=Q_{x_k}$. We say that $Q$ is good if there exists a cube $Q_1=Q_{x_n}$, such that
if
$$
\delta^k \leqslant \delta^{r} \delta^n \; \; \; (k\geqslant n+r)
$$
then either
$$
dist(Q, Q_1)\geqslant \delta^{k\gamma}\delta^{n(1-\gamma)}
$$
or
$$
dist(Q, X\setminus Q_1)\geqslant \delta^{k\gamma}\delta^{n(1-\gamma)}.
$$
\end{defin}
\begin{zamech}
Notice that $\delta^k=\ell(Q)$ just by definition.
\end{zamech}

If $Q$ is not good we call it bad.
\begin{theorem}
Fix a cube $Q_{x_k}$. Then
$$
\mathbb{P}(Q_{x_k}\; \mbox{is bad}\:) \leqslant \frac{1}{2}.
$$
\end{theorem}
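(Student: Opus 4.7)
The plan is a standard NTV-style telescoping argument over scales $m = k - n \geq r$. First I unpack the definition: $Q_{x_k}$ is bad iff there exists $n$ with $m \geq r$ and a cube $Q_{y_n}$ at level $n$ such that $Q_{x_k}$ straddles a buffer neighborhood of $\partial Q_{y_n}$ of thickness $\delta^{k\gamma}\delta^{n(1-\gamma)} = \delta^n\delta^{\gamma m}$, i.e., both $\mathrm{dist}(Q_{x_k}, Q_{y_n})$ and $\mathrm{dist}(Q_{x_k}, X\setminus Q_{y_n})$ are strictly less than this quantity. For each fixed $m$, only cubes with $y_n \in G_n \cap B(x_k, C\delta^n)$ can witness the bad event at that scale, and geometric doubling bounds their number by a constant $D$ depending only on the doubling constant and $\delta$. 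Hence
\[
\mathbb{P}(Q_{x_k}\text{ is bad}) \leq D\sum_{m \geq r}\sup_{y_n}\mathbb{P}\bigl(Q_{x_k}\text{ straddles the buffer of }Q_{y_n}\bigr).
\]

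For each fixed $y_n$, I would use Lemma 4.1 (the chain-separation lemma of Section 4, the analogue of Lemma 4.5 in \cite{HM}) to convert the straddling event into a rigid constraint on the $\prec$-chain $x_k = z_k \prec z_{k-1} \prec \cdots \prec z_n = y_n$. Provided the parameter $\gamma$ in the definition of good is chosen so that the hypothesis $\delta^m \geq 100\,\varepsilon$ of Lemma 4.1 is compatible with $\varepsilon\delta^n$ matching the buffer thickness, the lemma forces $|z_{j+1}z_j| \geq \delta^j/100$ for every $n \leq j < k$. By the construction of $\prec$, if there were a point of $G_j$ at distance $<\delta^j/1000$ from $z_{j+1}$ then $z_j$ would have to be that unique point, and then $|z_{j+1}z_j|<\delta^j/1000$, contradicting Lemma 4.1. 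So the straddling event forces the Main Lemma event $\exists z\in G_j: |z_{j+1}z|<\delta^j/1000$ to fail at every such level $j$.

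By the Main Lemma (Lemma 4.2), conditional on all random data above level $j$ -- which in particular determines $z_{j+1}$ -- the probability that this event fails is at most $1-a$, where $a > 0$ depends only on $\delta$ and the doubling constant. Since the nested grid choices $G_j \subset G_{j+1}$ and the $\prec$-assignments are made sequentially and independently across levels, these conditional bounds multiply, giving
\[
\mathbb{P}\bigl(|z_{j+1}z_j| \geq \delta^j/100 \text{ for all } n \leq j < k\bigr) \leq (1-a)^m.
\]
Combining with the doubling bound,
\[
\mathbb{P}(Q_{x_k}\text{ is bad}) \leq D\sum_{m\geq r}(1-a)^m = \frac{D(1-a)^r}{a},
\]
which is $\leq 1/2$ once $r$ is chosen large enough in the definition of good.

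The principal obstacle is the parameter matching in the translation step: Lemma 4.1 requires $\varepsilon \leq \delta^m/100$, while the buffer thickness suggests $\varepsilon = \delta^{\gamma m}$, so the constants $\gamma$, $\delta$, and the threshold $r$ must be tuned carefully in concert to avoid an inconsistency and to keep $D(1-a)^r/a \leq 1/2$. A secondary technical point is the conditional independence used to multiply the $(1-a)$ bounds across levels; one must condition on all higher-level grid and $\prec$ data before applying the Main Lemma at level $j$, which is legitimate because the random choice of $G_j \subset G_{j+1}$ is made afresh after that conditioning.
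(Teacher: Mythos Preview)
Your approach is essentially the paper's: a union bound over scales $s=k-n\ge r$, then Lemma~4.1 converts proximity to the boundary of a level-$n$ cube into the separation condition $|z_{j+1}z_j|\ge \delta^j/100$ along the $\prec$-chain, and the Main Lemma gives a factor $(1-a)$ per link by conditional independence of the successive grid choices. Two small points where the paper is tighter than your write-up. First, the paper does not union-bound over $D$ nearby cubes at level $n$; it uses only the unique ancestor $Q_{x_{k-s}}\ni x_k$, because if $x_k$ lies at distance $>2\delta^{k\gamma}\delta^{(k-s)(1-\gamma)}$ from $X\setminus Q_{x_{k-s}}$ then $Q_{x_k}$ automatically satisfies the ``far'' alternative with respect to every other cube at that level (they lie in $X\setminus Q_{x_{k-s}}$), so only the ancestor can cause badness. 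In particular your chain cannot be forced to end at an arbitrary $y_n$; it always ends at the ancestor, and Lemma~4.1 covers the case $z_n\neq y_n$ precisely to make this work. Second, the parameter-matching ``obstacle'' you flag is exactly why the paper does not get $(1-a)^{s}$: it isolates the chain argument in a separate lemma, applying Lemma~4.1 with $m$ chosen as the largest integer satisfying $500\varepsilon\le\delta^m$, so with $\varepsilon=\delta^{\gamma s}$ the chain has only $m\approx\gamma s$ links and one obtains $(1-a)^m\le C\varepsilon^\eta=C\delta^{\eta\gamma s}$ with $\eta=\log(1-a)/\log\delta$. Your displayed bound $(1-a)^{k-n}$ is therefore too strong as written; the correct exponent carries the factor $\gamma$, but the geometric series still sums to $\le\frac12$ for $r$ large.
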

\begin{zamech}[Discussion]
This theorem makes sense because when we fix a cube $Q_k$, say, $k\geqslant N$, so the grid $G_k$ is not even random, we can make big cubes random. And we claim that for big quantity of choices, our big cubes will have $Q_k$ either ``in the middle'' or far away, but not close to the boundary.
\end{zamech}

\begin{defin}
For $Q=Q_k$ define
$$
\delta_{Q}(\ep)=\delta_{Q}=\{x\colon dist(x, Q)\leqslant \ep\delta^{k} \;\mbox{and}\; dist(x, X\setminus Q)\leqslant \ep\delta^{k}\}
$$
\end{defin}

\begin{lemma}
$$
\mathbb{P}(x\in \delta_{Q_k}) \leqslant \ep^{\eta}
$$
for some $\eta>0$.
\end{lemma}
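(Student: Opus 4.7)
The plan is to iterate the main lemma across $m$ consecutive grids finer than $G_k$, where $m$ is chosen so that $\delta^m$ is of order $\ep$. More precisely, I would take $m$ with $100\ep\le\delta^m\le 100\ep/\delta$; the scale $\delta^{k+m}$ is then exactly the scale at which the hypotheses of the previous technical lemma (Lemma~4.5 of \cite{HM}, reproduced above) become active.

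If $x\in\delta_{Q_k}(\ep)$ then, after absorbing constants, $x\in\textup{clos}(Q_{x_k})$ and $dist(x,X\setminus\tilde{Q}_{x_k})<\ep\delta^k$. By Lemma~4.5, any chain $z_{k+m}\prec\ldots\prec z_{k+1}\prec x_k$ with $x\in\textup{clos}(Q_{z_{k+m}})$ is forced to be spread out: $|z_iz_j|\ge\delta^j/100$ for $k\le j<i\le k+m$. This separation is the structural input that will let the main lemma be applied at each intermediate level without interference between levels, since it guarantees that at every scale $\delta^{k+j}$ the chain element lives at a ``generic'' position in its grid, sufficiently far from the other chain elements that recoloring at level $k+j$ does not collide with chain data elsewhere.

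Next, conditioning on the grids $G_M$ ($M\ge k+m$) and on all $\prec$-relations below level $k+m$, I would expose the $m$ choices of $\prec$-assignment from level $k+m$ down to level $k+1$ one at a time. At each level the separation above places us in the finite-metric setting of Lemma~\ref{finiteY}, and the recoloring construction there gives an injection from $\prec$-assignments that route the chain ``to the wrong side'' of the boundary of $Q_{x_k}$ into those that route it ``to the right side''; this yields a conditional probability at most $1-a$ for the former, with $a\in(0,1)$ the constant from the main lemma. Since the $\prec$-choices across different levels are independent by construction, multiplying gives
\[
\mathbb{P}(x\in\delta_{Q_k})\le(1-a)^m\le C\ep^\eta,\qquad \eta=\frac{\log(1/(1-a))}{\log(1/\delta)}>0.
\]

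The main obstacle is the decoupling step. The per-level recoloring is morally the same as in the main lemma, but to turn it into an honest product of $m$ factors $(1-a)$ one must verify that the recoloring at level $k+j$ can be performed without disturbing the chain elements at levels $k+j\pm 1$, and that the resulting $\prec$-map still defines a legitimate lattice assignment. The separation $|z_iz_j|\ge\delta^j/100$ from Lemma~4.5 is precisely the ingredient that makes this possible, but assembling the $m$ conditional estimates into a genuine product bound (rather than only a union bound), and tracking how the geometric doubling constant enters the exponent $\eta$, is the subtle bookkeeping the proof will require.
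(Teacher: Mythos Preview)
Your overall architecture is the same as the paper's: pick $m$ with $\delta^m\asymp\ep$, invoke Lemma~4.5, apply the main lemma at each of the $m$ intermediate scales, and multiply to get $(1-a)^m\le C\ep^\eta$ with $\eta=\log(1-a)/\log\delta$. So the proposal is on the right track and ends at the right bound.

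However, you have the logical roles of the two lemmas partly inverted, and this makes your ``decoupling obstacle'' appear where there is none. In the paper, Lemma~4.5 is not an \emph{enabling} hypothesis for the main lemma; it is the \emph{reduction} step. Concretely: if $dist(x,X\setminus\tilde Q_{y_k})<\ep\delta^k$ then Lemma~4.5 forces $|x_{k+j}x_{k+j-1}|\ge\delta^{k+j-1}/100$ for every $j=1,\dots,m$ along the $\prec$-chain through $x$. But since $1/1000<1/4$, the event $\{|x_{k+j}x_{k+j-1}|\ge\delta^{k+j-1}/1000\}$ is exactly the event that no point of $G_{k+j-1}$ lies within $\delta^{k+j-1}/1000$ of $x_{k+j}$ (otherwise the $\prec$-rule would force $x_{k+j-1}$ to be that close point). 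Thus
\[
\{dist(x,X\setminus\tilde Q_{y_k})<\ep\delta^k\}\subset\bigcap_{j=1}^m\{\text{no point of }G_{k+j-1}\text{ is within }\delta^{k+j-1}/1000\text{ of }x_{k+j}\}.
\]
Now the main lemma is applied as a black box: conditioning on $G_{k+j}$ (hence on $x_{k+j}$), the next grid $G_{k+j-1}$ is a uniformly random maximal $\delta^{k+j-1}$-subset of $G_{k+j}$, and the main lemma gives conditional probability $\le 1-a$ for the $j$-th event. Since the successive grid selections are independent \emph{by construction}, the product bound $(1-a)^m$ is immediate; no per-level recoloring compatible across scales is needed, and the separation from Lemma~4.5 plays no role in securing independence.

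A second clarification: the randomness you should expose level by level is the random choice of the \emph{grid} $G_{k+j-1}\subset G_{k+j}$, not the $\prec$-assignment. The main lemma is a statement about grid selection; the $\prec$-relation only enters through the deterministic rule that a point within $\delta^{k+j-1}/4$ forces the parent.
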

\begin{proof}[Proof of the theorem]
Take the cube $Q_{x_k}$. There is a unique (random!) point $x_{k-s}$ such that $x_k\in Q_{x_{k-s}}$. Then
$$
dist(Q_{x_k}, X\setminus Q_{x_{k-s}})\geqslant dist(x_k, X\setminus Q_{x_{k-s}}) - diam(Q_{x_k}) \geqslant dist(x_k, X\setminus Q_{x_{k-s}}) - C \delta^{k}.
$$
Assume that $dist(x_k, X\setminus Q_{x_{k-s}})>2\delta^{k\gamma}\delta^{(k-s)(1-\gamma)}$ and that $s\geqslant r$ (this assumption is obvious, otherwise $Q_{x_{k-s}}$ does not affect goodness of $Q_{x_k}$).

Then, if $r$ is big enough ($\delta{r(1-\gamma)}<\frac{1}{2}$) we get
$$
dist(Q_{x_k}, X\setminus Q_{x_{k-s}}) \geqslant \delta^{k\gamma}\delta^{(k-s)(1-\gamma)},
$$
and so $Q_{x_k}$ is good.
Therefore,
$$
\mathbb{P}(Q_{x_k} \; \mbox{is bad}\:) \leqslant C \sli_{s\geqslant r} \mathbb{P}(x_k\in \delta_{Q_{k-s}}(\ep=\delta^{s\gamma}))\leqslant C \sli_{s\geqslant r} \delta^{\eta \gamma s}\leqslant 100C \delta^{\eta \gamma r}.
$$
For sufficiently large $r$ (or small $\delta$) this is less than $1\over 2$.
\end{proof}
\begin{zamech}[Discussion]
At the end of the proof we have claimed that
$$
\mathbb{P}(Q_{x_k} \; \mbox{is bad}\:) \leqslant C \sli_{s\geqslant r} \mathbb{P}(x_k\in \delta_{Q_{k-s}}(\ep=\delta^{s\gamma}))\leqslant C \sli_{s\geqslant r} \delta^{\eta \gamma s}\leqslant 100C \delta^{\eta \gamma r}.
$$
In particular, we did some estimate of the probability that $x_{k}\in \delta_{Q_{k-s}}$. Here it is crucial that the point $x_k$ is fixed and not random, so in some sense cubes $Q_{k-s}$ does not depend on $x_k$ (the conditional probability is equal to the unconditional probability, since $Q_k$ is fixed and not random).
\end{zamech}
\begin{proof}[Proof of the lemma]
Fix the largest $m$ such that $500\ep \leqslant \delta^{m}$. Choose a point $x_{k+m}$ such that $x\in Q_{x_{k+m}}$. Then by the main lemma
$$
\mathbb{P}(\exists x_{k+m-1}\in G_{k+m-1}\colon |x_{k+m} x_{k+m-1}|<\frac{\delta^{k+m-1}}{1000})\geqslant a.
$$
Therefore,
$$
\mathbb{P}(\forall x_{k+m-1}\in G_{k+m-1}\colon |x_{k+m} x_{k+m-1}|\geqslant\frac{\delta^{k+m-1}}{1000})\leqslant 1-a.
$$
Let now
$$
x_{k+m}\prec x_{k+m-1}.
$$
Then
$$
\mathbb{P}(\forall x_{k+m-2}\in G_{k+m-2}\colon |x_{k+m-1} x_{k+m-2}|\geqslant\frac{\delta^{k+m-2}}{1000})\leqslant 1-a.
$$
So
$$
\mathbb{P}(dist(x, X\setminus \tilde{Q}_k)<\ep \delta^{k})\leqslant \mathbb{P}(|x_{k+j}x_{k+j-1}|\geqslant \frac{\delta^{k+j-1}}{1000} \; \forall j=1,\ldots, m) \leqslant (1-a)^{m}\leqslant C\ep^{\eta}
$$
for
$$
\eta=\frac{\log{(1-a)}}{\log(\delta)}.
$$
\end{proof}
\section{Probability to be ``good'' is the same for every cube}
We make the last step to make the probability to be ``good'' not just bounded away from zero, but the same for all cubes.
We use the idea from ~\cite{HM2}.

Take a cube $Q(\omega)$. Take a random variable $\xi_{Q}(\omega^{'})$, which is equally distributed on $[0,1]$. We know that
$$
\mathbb{P}(Q \; \mbox{is good}) = p_{Q}>a>0.
$$
We call $Q$ ``really good'' if
$$
\xi_Q \in [0, \frac{a}{p_Q}].
$$
Otherwise $Q$ joins bad cubes. Then
$$
\mathbb{P}(Q \; \mbox{is really good}) = a,
$$
and we are done.
\section{Application}
As a main application of our construction, we state the following theorem.
\begin{defin}
Let $X$ be a geometrically doubling metric space. Suppose $K(x,y)\colon X\times X \to \R$ is a Calderon-Zygmund kernel of singularity $m$, i. e.
\begin{align}
&|K(x,y)|\leqslant \frac{1}{|xy|^m} \\
&|K(x,y)-K(x', y)|+|K(y,x)-K(y, x')|\leqslant \frac{|xx'|^\ep}{|xy|^{m+\ep}}.
\end{align}
Let $\mu$ be a measure on $X$, such that $\mu(B(x,r))\leqslant C r^{m}$, where $C$ doesn't depend on $x$ and $r$.
We say that $T$ is a Calderon-Zygmund operator with kernel $K$ if
\begin{align}
& T \; \mbox{is bounded} \; L^2(\mu)\to L^2(\mu), \\
& Tf(x) = \int K(x,y)f(y)d\mu(y), \; \forall x\not\in \textup{supp}\mu.
\end{align}
\end{defin}
\begin{defin}
Let $w>0$ $\mu$-a.e. Define
$$
w\in A_{2, \mu}\Leftrightarrow [w]_{2, \mu} = \sup_{B} \frac{1}{\mu(B)} \ili_{B} wd\mu \cdot \frac{1}{\mu(B)} \ili_{B} w^{-1}d\mu < \infty.
$$
\end{defin}
\begin{theorem}[$A_2$ theorem for a geometrically doubling metric space]
Let $X$ be a geometrically doubling metric space, $\mu$ and $T$ as above, $w\in A_{2, \mu}$.
In addition we assume that $\mu$ is a doubling measure. Then
$$
\|T\|_{L^2(wd\mu)\to L^2(wd\mu)} \leqslant C(T)[w]_{2, \mu}.
$$
\end{theorem}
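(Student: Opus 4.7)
The plan is to reduce the theorem to uniform weighted bounds on dyadic shifts, via the one-lattice strategy pioneered in \cite{Hyt} and streamlined in \cite{HPTV} and \cite{NV}. The randomization built in the previous sections supplies the ingredient of that strategy that was still missing in the GDMS setting: a probability space whose elementary event is one ``dyadic'' lattice $\mathcal{D}_\omega$ and for which the probability that any prescribed cube $Q\in\mathcal{D}_\omega$ is good equals a constant $a>0$ independent of $Q$ (Section 5).

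\emph{Step 1: Dyadic representation.} Using $\mathcal{D}_\omega$, build a Haar-type martingale decomposition adapted to $d\mu$, with functions $h_Q$ supported on the fuzzy cubes $\tilde Q$ and orthogonal in $L^2(\mu)$. Expand $\langle Tf,g\rangle$ in the biorthogonal system and, for each Haar pair $(Q,R)$ with $\ell(Q)\le\ell(R)$, insert the indicator of ``$Q$ good relative to $R$'' and divide by $a$. Because the probability of goodness is the same number $a$ for every cube, the contribution of the bad pairs drops after taking $\mathbb{E}_\omega$, and one arrives at a representation
$$\langle Tf,g\rangle \;=\; \frac{C(T)}{a}\,\mathbb{E}_\omega\sum_{i,j\ge 0}\tau_{ij}\,\langle S^{ij}_\omega f,g\rangle,$$
where $|\tau_{ij}|\le C\delta^{\eta\gamma(i+j)/2}$ by Lemma 4.5 and the smoothness of $K$, and $S^{ij}_\omega$ is a dyadic shift of complexity $(i,j)$ on $\mathcal{D}_\omega$. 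The thin-boundary estimate of Lemma 4.5 is what ensures $\mu(\partial\tilde Q)=0$ and makes the Haar system complete.

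\emph{Step 2: Weighted bound on shifts.} For each $(i,j)$ apply the weighted $T1$ theorem for dyadic shifts in doubling metric spaces --- either the argument of \cite{NTV3}/\cite{HPTV} transplanted to this setting, or, more efficiently, the Bellman function argument of \cite{NV} --- to obtain
$$\|S^{ij}_\omega\|_{L^2(wd\mu)\to L^2(wd\mu)}\;\le\; C(1+i+j)\,[w]_{2,\mu}.$$
The only structural inputs needed here are the doubling of $\mu$, the geometric doubling of $X$, and the essentially-partition property of $\{\tilde Q : Q\in\mathcal{D}_\omega,\;\ell(Q)=\delta^k\}$ at each scale $k$; all are at our disposal. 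Summing over $(i,j)$ the exponential decay of $\tau_{ij}$ against the linear factor $(1+i+j)$ yields a convergent bound $\le C[w]_{2,\mu}$, and taking $\mathbb{E}_\omega$ removes the lattice.

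The main obstacle is Step 1: one must verify that the bad part of the Haar expansion really does vanish under $\mathbb{E}_\omega$, and for this it is crucial that the probability of being good is the same $a$ for every cube --- a property unavailable in the two-lattice randomizations of \cite{HM}, and precisely what the construction of Sections 3--5 delivers. A secondary subtlety is the fuzziness of the cubes: unlike Euclidean dyadic cubes, the $\tilde Q$ form only an essential partition, but the thin-boundary Lemma 4.5 together with the doubling of $\mu$ absorb this, exactly as in the earlier GDMS treatments.
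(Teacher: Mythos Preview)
Your proposal is correct and follows exactly the route the paper indicates: the paper does not give a detailed proof of this theorem but states (in the introduction and Section~7) that it follows by combining the one-lattice randomization built here with the dyadic-representation-plus-shift-estimate machinery of \cite{Hyt}, \cite{HPTV}, or \cite{NV}. Your Steps~1 and~2 are precisely that combination, and your emphasis on the uniform goodness probability $a$ and the thin-boundary estimate matches the paper's own emphasis on why those ingredients were the missing pieces in the GDMS setting.
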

\begin{zamech}
We note that existence of such $\mu$ in a GDMS was proved in ~\cite{KV}.

\end{zamech}

\end{document}